\documentclass[11pt]{amsart}
\title[The special shadow-complexity of $\#_k(S^1\times S^3)$]
{The special shadow-complexity of $\#_k(S^1\times S^3)$}

\author{Hironobu Naoe}
\address{
Department of Mathematics, Chuo University, 1-13-27 Kasuga Bunkyo-ku, Tokyo, 112-8551, Japan}
\email{naoe@math.chuo-u.ac.jp}

\thanks{The author is supported by JSPS KAKENHI Grant Number JP20K14316.}
\keywords{shadow, 4-manifolds, shadow-complexity. }
\subjclass[2020]{Primary 57K41 ; Secondary 57Q15.}

\usepackage{amsfonts,amsmath,amssymb,amscd}
\usepackage{amsthm}
\usepackage{latexsym}
\usepackage{bm}
\usepackage[dvipdfmx]{graphicx}
\usepackage{psfrag}
\usepackage{color}
\usepackage{url}
\usepackage{ulem}
\usepackage{pinlabel}

\theoremstyle{plain}
\newtheorem{theorem}{Theorem}[section]

\newtheorem{corollary}[theorem]{Corollary}

\theoremstyle{definition}
\newtheorem{definition}[theorem]{Definition}

\theoremstyle{remark}
\newtheorem{claim}{Claim}

\newtheoremstyle{mycitation}%
    {}%
    {}%
    {\it}%
    {}%
    {\bf}%
    {}%
    {5pt}%
    {\thmname{#1} \thmnumber{#2}\thmnote{#3}.}%
\theoremstyle{mycitation}

\newcommand{\Z}{\mathbb{Z}}

\newcommand{\CP}{\mathbb{CP}^2}
\newcommand{\mCP}{\overline{\mathbb{CP}}^2}

\newcommand{\Int}{\mathrm{Int}}

\newcommand{\Nbd}{\mathrm{Nbd}}

\newcommand{\gl}{\mathfrak{gl}}

\newcommand{\shco}{\mathrm{sc}}
\newcommand{\spshco}{\mathrm{sc}^{\mathrm{sp}}}

\definecolor{darkred}{rgb}{.80,.0,.0}

\definecolor{bl}{gray}{0.7}

\newlength{\myheight}
\setlength{\myheight}{7mm}
\newlength{\myheighta}
\setlength{\myheighta}{10mm}

\makeatletter

\long\def\@makecaption#1#2{
  \small
  \vskip\abovecaptionskip
  \sbox\@tempboxa{#1. #2}
  \ifdim \wd\@tempboxa >\hsize
    #1. #2\par
  \else
    \global \@minipagefalse
    \hb@xt@\hsize{\hfil\box\@tempboxa\hfil}
  \fi
  \vskip\belowcaptionskip}

\makeatother
\begin{document}
\begin{abstract}
The special shadow-complexity is an invariant of closed $4$-manifolds
defined by Costantino using Turaev's shadows. 
We show that for any positive integer $k$, 
the special shadow-complexity of the connected sum of $k$ copies of $S^1\times S^3$ is exactly $k+1$. 
\end{abstract}

\maketitle
\section{Introduction}
A {\it shadow} was introduced by Turaev in 1990s for the study of quantum invariants of 
$3$-manifolds and links~\cite{Tur94}. 
It is defined as a certain $2$-dimensional polyhedron 
embedded in a smooth $4$-manifold. 
Turaev showed that any (closed) smooth $4$-manifold can be described 
by a shadow together with a coloring on regions by half-integers, 
which is called the {\it gleam}. 
There are various studies concerning shadows, 
see~\cite{CM17,Cos06,Cos06b,Cos08,CT08,IK14,KMN18,KN20,Mar11} for instance. 

A combinatorial description such as a shadow provides a framework 
according to the ``complexity'' of the description. 
Costantino applied shadows to define invariants of $4$-manifolds, 
called {\it the shadow-complexity} and {\it the special shadow-complexity}, 
with inspired by the Matveev complexity of $3$-manifolds. 
The (special) shadow-complexity of a (closed) $4$-manifold $W$ 
is defined as the minimum of the number of {\it true vertices} of all (special) shadows of $W$. 
Roughly speaking, 
it measures how complicated shadows of a given $4$-manifold need to be. 

In general, it is difficult to determine the value of an invariant defined as the minimum of something, 
especially to give a lower bound for it, 
and the (special) shadow-complexity is no exception. 
We refer the reader to~\cite{CT08} 
for the only known result giving a lower bound for the 
shadow-complexity of $4$-manifolds with non-empty boundary. 
In that paper, Costantino and Turston have established a relationship between 
the shadows and the geometric structures of $3$-manifolds. 
On the other hand, the shadow-complexity of closed $4$-manifolds has not 
been bounded from below by anything. 
Note that the unboundedness of the shadow-complexity of closed $4$-manifolds 
was shown in~\cite{KMN18}, but their proof does not provide a concrete lower bound. 

The following is our main theorem, 
which determines the explicit values of the special shadow-complexities 
for infinitely many closed $4$-manifolds. 
\begin{theorem}
\label{thm:mainthm}
For any positive integer $k$, $\spshco(\#_k(S^1\times S^3))=k+1$. 
\end{theorem}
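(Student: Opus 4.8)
The plan is to establish the two inequalities $\spshco(\#_k(S^1\times S^3))\le k+1$ and $\spshco(\#_k(S^1\times S^3))\ge k+1$ separately, the lower bound being the substantial half. For the upper bound I would exhibit an explicit special polyhedron. Starting from the observation that $\#_k(S^1\times S^3)$ is the double of the $4$-dimensional handlebody $\natural_k(S^1\times D^3)$, whose boundary is $\#_k(S^1\times S^2)$, I would take a minimal special spine $P_k$ of the $3$-dimensional genus-$k$ handlebody and regard it as a shadow via its canonical zero-gleam thickening, which realizes $\natural_k(S^1\times D^3)$ and whose reconstruction (capping $\#_k(S^1\times S^2)$ with the standard handlebody) produces exactly the double $\#_k(S^1\times S^3)$. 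Concretely $P_0$ is the abalone (one true vertex, a shadow of $S^4$) and $P_1$ is the minimal special spine of the solid torus (two true vertices); in general I would build $P_k$ by attaching $k$ standard ``one-handle'' blocks along a chain, each contributing a single true vertex, so that $P_k$ has $k+1$ true vertices, two regions, and $\pi_1(P_k)=F_k$. Verifying that $P_k$ is genuinely special (every region an open disk, every triple line an arc) and that thickening-and-capping returns $\#_k(S^1\times S^3)$ then yields the bound.

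For the lower bound, let $P$ be any special shadow of $W=\#_k(S^1\times S^3)$ with $V$ true vertices. The first step is a clean homotopy-theoretic reduction: the closed manifold is reconstructed as $W=W_P\cup_{\partial}\natural_n(S^1\times D^3)$, where $W_P$ thickens $P$ and $\partial W_P=\#_n(S^1\times S^2)$. Applying van Kampen to this decomposition, and using that the gluing induces an isomorphism $\pi_1(\partial W_P)\cong\pi_1(\natural_n(S^1\times D^3))=F_n$, collapses the amalgamated product and gives $\pi_1(P)\cong\pi_1(W_P)\cong\pi_1(W)=F_k$; in particular $b_1(P)=k$. It then remains to bound $V$ below by $b_1(P)+1$, for which I would use the natural CW structure of a closed special polyhedron: the $V$ true vertices as $0$-cells, the $E=2V$ triple arcs as $1$-cells, and the $R$ disk regions as $2$-cells. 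Since the singular graph is connected and $4$-valent, its cycle space has rank $V+1$, whence $b_1(P)=V+1-\operatorname{rank}\partial_2$, where $\partial_2$ records the homology classes of the region boundaries. Thus the whole theorem reduces to the purely combinatorial inequality $\operatorname{rank}\partial_2\ge 2$.

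This last step is where the special structure must genuinely be used, and I expect it to be the main obstacle. The algebraic topology of the reconstruction by itself yields only $\operatorname{rank}\partial_2\ge 0$, equivalently the far weaker $V\ge b_1(P)-1$, so the missing ``$+2$'' has to be extracted from the geometry: every edge of $P$ is authentically triple, $P$ has no free faces, and the reconstructed boundary $\partial W_P$ must be the very restrictive manifold $\#_n(S^1\times S^2)$, with free fundamental group and torsion-free homology. The plan is to rule out all configurations with $\operatorname{rank}\partial_2\le 1$—in particular the hypothetical one-vertex, one-region polyhedron—by showing that such a region-attaching pattern either violates the local model of a special polyhedron or forces torsion into $H_1(\partial W_P)$, contradicting $\partial W_P\cong\#_n(S^1\times S^2)$. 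The upper-bound polyhedra $P_k$, for which $\operatorname{rank}\partial_2=2$ exactly, confirm that the inequality $V\ge b_1(P)+1$ is sharp and that the value $k+1$ cannot be improved.
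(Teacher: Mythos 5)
Your reduction of the lower bound is sound bookkeeping, but it stops exactly where the theorem begins. Writing $b_1(P)=(V+1)-\operatorname{rank}\partial_2$ and noting $b_1(P)=k$ makes the desired inequality $V\ge k+1$ literally \emph{equivalent} to $\operatorname{rank}\partial_2\ge 2$; so the step you defer as ``the main obstacle'' is not a lemma inside your proof, it is the theorem restated, and neither of your proposed strategies can establish it. A purely local argument is impossible: the ``triple hat'' (a disk attached to a circle by a degree-three map) is a genuine special polyhedron with no true vertices, one region, and $\operatorname{rank}\partial_2=1$, so the local models of special polyhedra do not forbid rank $\le 1$; any proof must invoke the global hypothesis. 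Your second strategy (torsion in $H_1(\partial W_P)$) does use the hypothesis, but it is both undeveloped --- it would require analyzing the reconstruction over all admissible gleams, which you never do --- and likely too weak in principle: a region attached along a commutator-type curve has zero image under $\partial_2$ yet creates no torsion and no homological anomaly, so abelian data cannot by itself exclude $\operatorname{rank}\partial_2\le 1$. What the paper supplies at exactly this point is non-abelian: from the special structure it derives that in the presentation $\langle a_1,\dots,a_{n+1}\mid r_1,\dots,r_m\rangle$ of $\pi_1(X)$ each generator occurs exactly three times among the relators, each relator is a reduced word, and distinct relators share no subword of length $\ge 2$; it then eliminates one generator (giving $n\ge k$) and rules out $n=k$ because a one-relator quotient of $F_k$ surjecting onto the Hopfian group $F_k$ forces that relator to reduce to the empty word, contradicting those combinatorial facts. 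Nothing in your outline substitutes for this argument.

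The upper bound also has a gap, though a more repairable one. It rests on the assertion that the genus-$k$ handlebody admits a special spine with exactly $k+1$ true vertices; this is never constructed (already for $k=1$ you would need to exhibit a two-vertex special spine of the solid torus, equivalently a two-tetrahedron ideal triangulation of it), and you cannot borrow the paper's polyhedra for this purpose: $Z_k$ carries half-integer gleams, hence regions with $\Z_2$-gleam $1$, so it is not a spine of any orientable $3$-manifold. The paper sidesteps $3$-dimensionality altogether: $X_k^\circ$ collapses onto a wedge of $k$ circles, so its thickening is $\natural_k(S^1\times B^3)$ for free, and a single boundary-disposal makes it special at the cost of exactly two vertices, giving $c(Z_k)=(k-1)+2=k+1$. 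If you keep your doubling picture, you must still build the spines $P_k$ explicitly and verify they are special --- the same kind of concrete work the paper does with its vertex-creation and boundary-disposal moves, with the added constraint of $3$-thickenability that the paper never needs.
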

An upper bound of $\spshco(\#k(S^1\times S^3))$ can be constructed by an easy observation. 
Actually, $\spshco(S^1\times S^3)=2$ is easily checked 
by constructing a concrete shadow 
and the classification of the closed $4$-manifolds with special shadow-complexity at most $1$ in~\cite{Cos06b}. 
As Costantino mentioned in~\cite{Cos06b}, 
the inequality $\spshco(W\# W')\leq\spshco(W)+\spshco(W')+4$ holds 
for any closed $4$-manifolds $W$ and $W'$. 
Therefore, we obtain $\spshco(\#_k(S^1\times S^3))\leq 6k-4$, but it is too rough. 

In order to give the upper bound $\spshco(\#_k(S^1\times S^3))\leq k+1$, 
we construct a shadow of $\#_k(S^1\times S^3)$ concretely. 
For this construction, we will introduce two techniques, 
boundary-disposal and vertex-creation. 
On the other hand, in order to give the lower bound 
$\spshco(\#_k(S^1\times S^3))\geq k+1$, 
we investigate presentations of the fundamental group of shadows of $\#_k(S^1\times S^3)$.

As immediate consequences of the proof of Theorem~\ref{thm:mainthm}, 
we have the following. 
\begin{corollary}[Corollary~\ref{cor:surj}]
The special shadow-complexity of closed $4$-manifolds is a surjection to $\Z_{\geq0}\setminus\{1\}$.
\end{corollary}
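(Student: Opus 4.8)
The plan is to establish the two inclusions whose conjunction says that the image of $\spshco$ is exactly $\Z_{\geq0}\setminus\{1\}$. Surjectivity onto the integers $\ge 2$ is immediate from Theorem~\ref{thm:mainthm}: as $k$ ranges over the positive integers, the manifolds $\#_k(S^1\times S^3)$ realize every value $k+1=2,3,4,\dots$. Hence only the two smallest values, $0$ and $1$, call for separate treatment, and for both I would appeal to Costantino's classification of the closed $4$-manifolds with special shadow-complexity at most $1$ in~\cite{Cos06b}.

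For the value $0$, it suffices to note that the list provided by that classification is non-empty and consists of manifolds with $\spshco=0$ (for instance $S^4$); hence $0$ lies in the image of $\spshco$. For the value $1$, which is the only point carrying any content, I would read off from the same classification that every closed $4$-manifold with $\spshco\le 1$ in fact satisfies $\spshco=0$; equivalently, no closed $4$-manifold has special shadow-complexity equal to $1$. Therefore $1$ is not in the image.

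Combining these facts gives $\{0\}\cup\{2,3,4,\dots\}=\Z_{\geq0}\setminus\{1\}$ for the image of $\spshco$, which is the assertion. I anticipate no genuine obstacle here: the substantive input is already in place, the upper portion of the range coming from Theorem~\ref{thm:mainthm} and the exceptional behavior at $0$ and $1$ from the cited classification in~\cite{Cos06b}. The remaining task is purely organizational: recording which values are attained and confirming that $1$ alone is omitted.
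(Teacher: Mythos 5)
Your proposal is correct and follows exactly the paper's argument: values $k+1\geq 2$ are realized by $\#_k(S^1\times S^3)$ via Theorem~\ref{thm:mainthm}, while Costantino's classification (Theorem~\ref{thm:sc^sp_at_most_one}) shows $0$ is attained (e.g.\ by $S^4$) and $1$ is never attained. No differences worth noting.
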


\begin{corollary}[Corollary~\ref{cor:lower_bound}]
For any closed $4$-manifold $W$, 
$\mathrm{rank}(\pi_1(W))\leq\spshco(W)$. 
Moreover, if $\pi_1(W)$ is the free group, then $\mathrm{rank}(\pi_1(W))+1\leq\spshco(W)$. 
\end{corollary}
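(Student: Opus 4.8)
The plan is to read a presentation of $\pi_1(W)$ off of a minimal special shadow and to control its rank cell by cell. Fix a special shadow $P$ of $W$ with exactly $n=\spshco(W)$ true vertices; recall that $\pi_1(W)\cong\pi_1(P)$ for any shadow. Regard $P$ as a CW complex whose $1$-skeleton is the singular set $\Sing(P)$ and whose open $2$-cells are the regions. Since $P$ is special, $\Sing(P)$ is a connected $4$-valent graph with $n$ vertices, hence with $2n$ edges, and each region is a disk. Contracting a spanning tree identifies $\pi_1(\Sing(P))$ with the free group on the remaining $n+1$ edges, so attaching the $r$ regions gives
\[
\pi_1(W)\cong\pi_1(P)=\langle\, x_1,\dots,x_{n+1}\mid w_1,\dots,w_r\,\rangle,
\]
where each relator $w_j$ is a region's boundary word. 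In particular $\mathrm{rank}(\pi_1(W))\le n+1$ immediately; the task is to save one more generator in general and two when $\pi_1(W)$ is free. The structural fact behind both savings is that every edge of $\Sing(P)$ is a triple line, so exactly three region-sides abut it and each edge is traversed three times in total by $w_1,\dots,w_r$.

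I would treat the free case first, as it drives Theorem~\ref{thm:mainthm}. If $\pi_1(W)$ is free of rank $\rho$ then $b_1(P)=\rho=\mathrm{rank}(\pi_1(P))$, and comparing the cellular and Betti values of the Euler characteristic,
\[
r-n=\chi(P)=1-\rho+b_2(P),
\]
yields $n=\rho+\bigl(r-1-b_2(P)\bigr)$. As $P$ has no $3$-cells, $b_2(P)=\dim\ker\partial_2$ for $\partial_2\colon C_2(P)\to C_1(P)$, so $r-b_2(P)=\mathrm{rank}(\partial_2)$ and the desired bound $\rho+1\le n$ is \emph{equivalent} to $\mathrm{rank}(\partial_2)\ge 2$. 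One half is easy: over $\Z/2$ the coefficient of an edge $e$ in $\partial_2$ of the sum of all regions equals its number of sides, namely $3\equiv 1$, so that sum is not a cycle and $\mathrm{rank}(\partial_2)\ge 1$.

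Upgrading this to $\mathrm{rank}(\partial_2)\ge 2$ is the sharp ``$+1$'' and is the step I expect to be the main obstacle. Mod-$2$ parity alone does not suffice, since it allows a single odd region accompanied by cycles; the improvement must exploit the local model at a true vertex, namely the cone over the $1$-skeleton of a tetrahedron, whose six sheets meet the four incident edges $e_1,\dots,e_4$ in the pattern $s_{ij}\leftrightarrow\{e_i,e_j\}$. I would use this pattern to produce a boundary vector independent from the all-edges vector, and I expect that making this independence global forces honest use of the hypothesis that $P$ is a shadow of a \emph{closed} $4$-manifold rather than merely a special polyhedron.

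The general bound is handled directly on the presentation. Because the three sides of each edge are split among the regions, some region meets a given edge an odd number of times; after reducing the boundary words I would show this always exposes a generator occurring exactly once in some relator $w_j$, so a single Tietze move deletes that generator and $w_j$ and leaves at most $n$ generators. Hence $\mathrm{rank}(\pi_1(W))\le n=\spshco(W)$. The delicate point, the analogue here of $\mathrm{rank}(\partial_2)\ge 2$, is to rule out the degenerate configuration in which all three sides of some non-tree edge lie on one region; I would again dispatch this using the true-vertex model.
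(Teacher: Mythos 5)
Your proposal has genuine gaps in both halves, and in each case the gap sits exactly where the paper does its real work. For the general bound $\mathrm{rank}(\pi_1(W))\le n$, your plan (spanning-tree presentation, then a Tietze elimination) is the paper's plan, but the step you postpone is the whole point: observing that some region meets some edge an \emph{odd} number of times is vacuous, since ``odd'' includes the bad multiplicity $3$, and you also need the good edge to be a non-tree edge. The paper handles this by first invoking \cite[Corollary 3.17]{Cos06b} to get $m\ge 2$ regions for a special shadow of a closed $4$-manifold, deducing that no triple line is passed through by only a single region, hence producing a triple line $a_1$ and a region $R_1$ meeting it exactly once, and only \emph{then} choosing the spanning tree $T$ so as to avoid $a_1$ (the order of these choices matters). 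Note that the input here is a global fact about shadows of closed $4$-manifolds, not a consequence of the local true-vertex model, so ``I would dispatch this using the true-vertex model'' is not a proof and is not obviously completable as stated.

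For the ``moreover'' part the situation is worse: your Euler-characteristic computation is correct, but it shows that $\rho+1\le n$ is \emph{equivalent} to $\mathrm{rank}(\partial_2)\ge 2$, so the reformulation contains no progress — it is the entire statement — and you explicitly leave it unproven, calling it the main obstacle. The paper does not prove this homologically at all. It proves three combinatorial claims from the vertex model (each generator occurs exactly three times in the relators; all relators are reduced; distinct relators share no subword of length $\ge 2$, up to inversion), then assumes $n=k$, splits into three cases according to how the two occurrences of $a_1$ remaining after the elimination of $r_1$ are distributed, and in each case uses that a one-relator quotient of $F_k$ surjecting onto $\pi_1(X)\cong F_k$ forces the relator to reduce to the empty word (free groups are Hopfian), contradicting the reducedness and no-common-subword claims. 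That subword information is invisible to the cellular chain complex — which is precisely why mod-$2$ (or integral) rank arguments stall at $\mathrm{rank}(\partial_2)\ge 1$ — so the homological framing you chose is unlikely to be closable by ``local independence of boundary vectors''; some non-abelian input, such as the Hopf property argument the paper uses, is needed. As written, the proposal establishes neither inequality.
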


In Section~\ref{sec:Shadows}, 
we review the notion of shadows of smooth $4$-manifold and the (special) shadow-complexity of $4$-manifolds. 
In Section~\ref{sec:Results}, we first introduce two modifications, 
Boundary-disposal and vertex-creation, of shadowed polyhedra 
to construct a special shadow $Z_k$ of $\#_k(S^1\times S^3)$, 
and then we show that $Z_k$ attains the special shadow-complexity of $\#_k(S^1\times S^3)$. 


Throughout this paper, any manifold is supposed to be compact, connected, oriented and smooth 
unless otherwise mentioned. 
\section{Shadows and complexity of $4$-manifolds}
\label{sec:Shadows}
\subsection{Simple polyhedra and shadowed polyhedra}
We first introduce some terminology. 
A {\it simple polyhedron} is a connected compact space $X$ 
such that a regular neighborhood $\Nbd(x;X)$ of each point $x\in X$ 
is homeomorphic to one of (i)-(iv) shown in Figure~\ref{fig:local_model}. 
\begin{figure}[tbp]
\labellist
\footnotesize\hair 2pt
\pinlabel (i) [B] at    48.19 8.73
\pinlabel (ii) [B] at  147.40 8.73
\pinlabel (iii) [B] at 246.61 8.73
\pinlabel (iv) [B] at  334.49 8.73
\endlabellist
\centering
\includegraphics[width=.65\hsize]{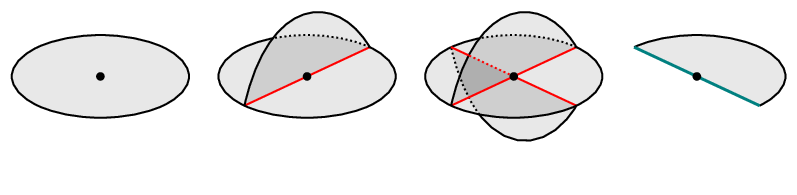}
\caption{Local models of simple polyhedra.}
\label{fig:local_model}
\end{figure}
A point of type~(iii) is called a {\it true vertex}. 
The set of all points of types (ii) and (iii) is called the {\it singular set} of $X$, 
which is denoted by $S(X)$. 
Note that each connected component of $S(X)$ is a circle or a quartic graph. 
A connected component of $S(X)$ with the true vertices removed is called a {\it triple line}. 
Each connected component of $X\setminus S(X)$ is called a {\it region}. 
If every region of $X$ is an open disk, then $X$ is said to be {\it special} 
and is called a {\it special polyhedron}. 
The set of points of type~(iv) is the {\it boundary} of $X$, 
which is denoted by $\partial X$. 
If $\partial X$ is empty, the simple polyhedron $X$ is said to be {\it closed}. 
Thus, a special polyhedron is closed. 
A region not intersecting $\partial X$ is called an {\it internal region}. 
A {\it boundary region} is a region that is not an internal region. 

We then define the $\Z_2$-{\it gleam} of a simple polyhedron $X$. 
Let $R$ be an internal region of $X$. 
Then $R$ is homeomorphic to the interior of some compact surface $F$, 
and the homeomorphism $\Int F\to R$ will be denoted by $f$. 
This $f$ can extend to a local homeomorphism $\overline{f}:F\to X$. 
Moreover, there exists a simple polyhedron $\widetilde{F}$ obtained from $F$ 
by attaching an annulus or a M\"obius band to each boundary component of $F$ along the core circle 
such that $\overline{f}$ can extend to a local homeomorphism $\widetilde{f}:\widetilde{F}\to X$. 
Then the number of the M\"obius bands attached to $F$ modulo $2$ 
is called the $\Z_2$-{\it gleam} of $R$ and is denoted by $\gl_2(R)\in\{0,1\}$. 
Note that for each region, 
its $\Z_2$-gleam is determined only by the combinatorial structure of $X$. 

A function mapping each internal region $R$ of $X$ 
to a half-integer $\gl(R)$ satisfying $\gl(R)+\frac12\gl_2(R)\in\Z$
is called a {\it gleam function}, or simply {\it gleam}, of $X$. 
We also call the value $\gl(R)$ the {\it gleam} of $R$. 
A simple polyhedron equipped with a gleam is called a {\it shadowed polyhedron}. 
\subsection{Shadows of $4$-manifolds}
Let $X$ be a simple polyhedron. 
A $4$-manifold $M$ with boundary is called a {\it $4$-dimensional thickening} of $X$ if 
the following hold;
\begin{itemize}
 \item
$X$ is embedded in $M$ local-flatly, that is, 
a regular neighborhood $\Nbd(x;X)$ of each point $x\in X$ is contained in a smooth $3$-ball in $M$, 
 \item 
$M$ collapses onto $X$ under some triangulations agreeing with the smooth structure of $M$, and 
 \item 
$X\cap\partial M=\partial X$. 
\end{itemize} 
We call $X$ a {\it shadow} of $M$, 
which is the definition of shadows of $4$-manifolds with boundary. 
Shadows of closed $4$-manifolds will be defined later. 

Any simple polyhedron has a $4$-dimensional thickening. 
Although there might be non-diffeomorphic $4$-dimensional thickenings for a single simple polyhedron, 
Turaev showed that there exists a canonical way to associate to each shadowed polyhedron $X$ 
a unique $4$-dimensional thickening $M_X$ up to diffeomorphism. 
This correspondence is called {\it Turaev's reconstruction}. 

We give a brief review of Turaev's reconstruction. 
Let $X$ be a shadowed polyhedron equipped with a gleam $\gl$. 
Let $R_1,\ldots,R_m$ denote the internal regions of $X$. 
Set $\bar R_i=R_i\setminus \Int \Nbd(S(X);X)$ for $i\in\{1,\ldots,m\}$ and 
$X_S=X\setminus \Int (\bar R_1\sqcup\cdots\sqcup \bar R_m)$. 
Let $\varphi:\partial \bar R_1\sqcup\cdots\sqcup\partial \bar R_m\to \partial X_S\setminus\partial X$ 
denote a homeomorphism reconstructing $X$ as $(\bar R_1\sqcup\cdots\sqcup \bar R_m\sqcup X_S)/\varphi$. 
It is easy to see that there exists a $3$-dimensional (possibly non-orientable) handlebody $N_S$ 
in which $X_S$ is properly embedded such that $N_S$ collapses onto $X_S$. 
Set $B_S=\Nbd(\partial X_S;\partial M_S)$, which consists of disjoint annuli or M\"obius bands. 
Let $M_S$ be the subbundle of the determinant line bundle over $N_B$ 
with fibers of length $\leq1$ with respect to an auxiliary Riemannian metric. 
Note that $M_S$ is a $4$-dimensional thickening of $X_S$ 
with $B_N$ embedded in the boundary $\partial M_S$. 
Set $N_i=\bar R_i\times [-1,1]$, and suppose $\bar R_i$ is embedded in $N_i$ 
by identifying $\bar R_i$ with $\bar R_i\times \{0\}$. 
Note that $N_i$ is a non-orientable $3$-manifold if $\bar R_i$ is non-orientable. 
Set $A_i=\partial \bar R_i\times [-1,1]\subset N_i$. 
Let $M_i$ be the subbundle of the determinant line bundle over $N_i$ 
with fibers of length $\leq1$ with respect to an auxiliary Riemannian metric, 
so that it contains $\bar R_i$ properly and $A_i$ in the boundary $\partial M_i$. 
For each $i\in\{1,\ldots,m\}$, we glue $M_i$ to $M_S$ by an embedding 
$\Phi_i:\Nbd(\partial \bar R_i;\partial M_i)\to \partial M_S$ 
such that $\Phi_i|_{\partial \bar R_i}=\varphi|_{\partial \bar R_i}$ and 
$\Phi_i(A_i)$ is rotated $\gl(R_i)$ times with respect to $B_i$, 
and the obtained $4$-manifold is nothing but the $4$-dimensional thickening $M_X$ that we wanted. 

We next define shadows of closed $4$-manifolds. 
\begin{definition}
Let $W$ be a closed $4$-manifold and $X$ a simple polyhedron embedded in $W$. 
We call $X$ a {\it shadow} of $W$ if the following hold;
\begin{itemize}
 \item
$X$ is locally-flat in $W$, and
 \item 
$W\setminus\Int\Nbd(X;W)$ is diffeomorphic to $\natural_k(S^1\times B^3)$ for some $k\in\Z_{\geq0}$. 
\end{itemize}
\end{definition}

By the second condition in the definition, 
$X$ can be regraded as a $2$-skeleton of $W$ since 
$W$ is obtained from $\Nbd(X;W)$ by attaching some $3$-handles and $4$-handles. 
Therefore, we have $H_1(X)\cong H_1(W)$ and $\pi_1(X)\cong \pi_1(W)$. 

The notion of shadows was introduced by Turaev in order to study quantum invariants of $3$-manifolds and links, and he showed the following. 
\begin{theorem}[Turaev~\cite{Tur94}]
\label{thm:Turaev}
Any closed $4$-manifold admits a shadow. 
\end{theorem}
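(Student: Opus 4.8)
The plan is to read off a shadow from a handle decomposition of $W$, using only the handles of index at most $2$. Since $W$ is connected and closed, standard handle-trading (cancelling $0$--$1$ handle pairs, and dually $3$--$4$ pairs) lets me assume the decomposition has a single $0$-handle and a single $4$-handle. Let $k$ denote the number of $3$-handles, let $Y$ be the union of all handles of index $\leq 2$, and let $C=\Cl(W\setminus Y)$ be the complementary handlebody, so that $\partial C=\partial Y$.

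First I would verify the complement condition in the definition of a shadow. The piece $C$ contains exactly the critical data of index $3$ and $4$, so turning $W$ upside down (reversing the handle decomposition) converts the $4$-handle into a $0$-handle and each $3$-handle into a $1$-handle. Hence $C$ is a $4$-dimensional handlebody built from one $0$-handle and $k$ $1$-handles, i.e.\ $C\cong\natural_k(S^1\times B^3)$. Because $Y$ collapses onto a $2$-dimensional spine, $Y$ is a regular neighborhood of that spine in $W$; once the spine is upgraded to a simple polyhedron $X$, the identity $\Nbd(X;W)=Y$ gives $W\setminus\Int\Nbd(X;W)=C\cong\natural_k(S^1\times B^3)$, which is precisely the second defining property.

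Next I would construct the simple polyhedron itself. The sub-handlebody $Y_{\leq 1}$ of $0$- and $1$-handles is $\natural(S^1\times B^3)$ and collapses onto a graph $G$ (a wedge of circles), while each $2$-handle, attached along a framed circle in $\partial Y_{\leq 1}$, contributes its core disk glued to $G$ along that circle. This yields a $2$-dimensional spine $P=G\cup(\text{core disks})$ of $Y$. Putting the attaching circles of the $2$-handles in general position in $\partial Y_{\leq 1}$ and performing local surgeries near the multiple points and along $G$, I would arrange that every point of the resulting spine $X$ has a regular neighborhood of one of the types (i)--(iv); transversality is available because the disks and the graph lie in the smooth $4$-manifold $Y$, and the same product structure keeps each $\Nbd(x;X)$ inside a smooth $3$-ball, so $X$ can be kept locally flat. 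Finally the gleam of each region is recovered from the framings of the $2$-handles, so that Turaev's reconstruction returns $M_X\cong Y$ and $X$ is a genuine shadow of $W$.

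The main obstacle is this middle step: the naive spine $P$ has disks meeting $G$ and one another along configurations that need not match the allowed local models, so I must convert $P$ into a bona fide simple polyhedron \emph{without} changing the diffeomorphism type of a regular neighborhood. I would isolate the non-generic loci --- points where more than two sheets meet, or where a disk meets $G$ non-transversally --- and replace each by a model piece assembled from triple lines and true vertices, checking case by case that the regular neighborhood is preserved. This is what guarantees simultaneously that the complement remains $\natural_k(S^1\times B^3)$ and that local flatness of the embedding $X\hookrightarrow W$ is maintained.
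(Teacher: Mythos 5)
First, a caveat on the comparison: the paper does not prove this theorem at all --- it is quoted from Turaev's book \cite{Tur94} --- so your proposal can only be measured against the standard published argument (see \cite{Tur94}, \cite{Cos05}, or \cite{CT08}). At the level of global strategy you reproduce that argument correctly: reduce to a decomposition with one $0$- and one $4$-handle, let $Y$ be the union of handles of index at most $2$, observe that the complement turned upside down is $\natural_k(S^1\times B^3)$, and realize $Y$ as a regular neighborhood of a simple polyhedron so that $\Nbd(X;W)\cong Y$ by uniqueness of regular neighborhoods. Those bookend steps are fine. The problem is the middle step, which you yourself flag as ``the main obstacle'': producing the simple polyhedron. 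Your device --- collapse $Y_{\leq 1}$ to its core graph $G$, glue the $2$-handle cores to $G$, and repair the result by unspecified ``local surgeries'' --- is where the entire content of the theorem lives, and as described it does not work as a proof. A generic projection of the attaching link $L\subset\partial Y_{\leq1}$ onto a $1$-complex $G$ is not an embedding or even an immersion: it has fold points, points of arbitrarily high multiplicity, and uncontrolled behavior over the wedge point of $G$. The mapping cylinder of such a projection together with the core disks is a $2$-complex whose singularities are not of types (i)--(iv) and do not form a finite list of local models, so ``replace each by a model piece, checking case by case'' is not a procedure one can actually carry out, let alone while certifying that the regular neighborhood is unchanged.

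The missing idea in the standard proof is to work one dimension up from $G$. Take a collar $\partial Y_{\leq1}\times[0,1]\subset Y_{\leq1}$ and choose a simple \emph{two-dimensional} polyhedron $Q$ inside the $3$-manifold $\partial Y_{\leq1}\cong\#_h(S^1\times S^2)$ whose complement is a union of open balls and which contains $L$ \emph{generically} with respect to $S(Q)$: $L$ misses the true vertices and crosses the triple lines transversely. (Such a $Q$ exists: put a diagram of $L$ on a simple spine of the punctured manifold, which exists by Casler's theorem, and insert a small wall at each crossing.) Then
$X=Q\cup\bigl(L\times[0,1]\bigr)\cup(\text{core disks of the 2-handles})$
is \emph{automatically} a simple polyhedron: along $L$ one sees only the local models (ii) and (iii), precisely because $L$ is a generic curve on a simple $2$-polyhedron rather than a singular projection onto a graph. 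Local flatness is immediate since every local picture lies in a $3$-dimensional slice, and the complement of $\Nbd(X;W)$ is computed to be $\natural_{h+k+b-1}(S^1\times B^3)$, where $b$ is the number of complementary balls of $Q$: the inner copy of the $1$-handlebody and the $3$/$4$-handle side get boundary-connect-summed through the collar regions over those balls. This genericity-on-a-$2$-spine step is exactly what replaces your ``local surgeries,'' and without it (or an equivalent device) your outline has a genuine gap at its central step.
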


Suppose a shadow $X$ of a closed $4$-manifold $W$ is given. 
Turaev showed that there exists a gleam of $X$ 
such that the $4$-dimensional thickening $M_X$ of the shadowed polyhedron $X$ 
is diffeomorphic to $\Nbd(X;W)$, see~\cite{Cos05,Tur94}. 

By the definition of shadows, $\partial \Nbd(X;W)$ is diffeomorphic to $\#_k(S^1\times S^2)$ for some $k\in\Z_{\geq0}$. 
By Laudenbach and Po\'enaru~\cite{LP72}, 
a closed $4$-manifold obtained from $\Nbd(X;W)$ by attaching $3$- and $4$-handles is unique up to diffeomorphism, 
which implies that any closed $4$-manifold can be described by a shadowed polyhedron. 

\subsection{Complexity of $4$-manifolds}
The {\it complexity} of a simple polyhedron $X$ is defined as the number $c(X)$ of true vertices of $X$. 
Costantino defined two kinds of invariants of $4$-manifolds using shadows in~\cite{Cos06b}. 
\begin{definition}
Let $W$ be a $4$-manifold with possibly non-empty boundary. 
\begin{enumerate}
 \item
The {\it shadow-complexity}, denoted by $\shco(W)$, of $W$ is the minimum of $c(X)$ for all shadows $X$ of $W$. 
 \item 
The {\it special shadow-complexity}, denoted by $\spshco(W)$, of $W$ is the minimum of $c(X)$ for all special shadows $X$ of $W$. 
\end{enumerate}
\end{definition}
There exist exotic (i.e. homeomorphic but not diffeomorphic) smooth structures in dimension $4$, 
and it is known that the shadow-complexity and the special shadow-complexity 
are invariants essentially for the smooth structures of $4$-manifolds~\cite{Mar11}. 

Our main object is the special shadow-complexity,
which has a remarkable property: 
\begin{theorem}
[\cite{Cos06b}, see also~\cite{Mar05}]
The special shadow-complexity of closed $4$-manifolds is a finite-to-one invariant.  
\end{theorem}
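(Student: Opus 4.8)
The plan is to prove that for each $n\in\Z_{\geq0}$ only finitely many diffeomorphism types of closed $4$-manifolds $W$ satisfy $\spshco(W)=n$. If $\spshco(W)=n$ then $W$ has a special shadow $X$ with exactly $n$ true vertices, so it suffices to bound the number of closed $4$-manifolds that can be reconstructed from special shadowed polyhedra with $n$ true vertices. I would split this into two finiteness statements: that there are finitely many special polyhedra (up to homeomorphism) with $n$ true vertices, and that each of them carries only finitely many gleams for which Turaev's reconstruction produces a shadow of a \emph{closed} $4$-manifold.

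First I would prove the combinatorial finiteness. For a connected special polyhedron the singular set $S(X)$ is connected, since the boundary circle of each disk region maps into a single connected component of $S(X)$; hence $S(X)$ is a connected $4$-valent graph, which for $n\geq1$ has $n$ vertices and $2n$ edges, and there are only finitely many such graphs (the case $n=0$, where $S(X)$ is a single circle whose three sheets are permuted by a monodromy in the symmetric group $S_3$, is finite as well). Each triple line is adjacent to exactly three local sheets, so the total length of the boundary words of all regions equals $3\cdot 2n=6n$; as each region is a disk whose boundary word has length at least $1$, there are at most $6n$ regions, each glued to $S(X)$ along a cyclic word of bounded length. The attaching data is therefore a finite combinatorial object, and only finitely many special polyhedra with $n$ true vertices exist.

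The main obstacle is the second statement: controlling the gleams. A single special polyhedron $X$ admits infinitely many gleams $\gl$ --- half-integers on the regions, constrained only by the $\Z_2$-parity condition --- and distinct gleams generically yield non-diffeomorphic thickenings $M_X$, so without a further restriction one polyhedron could account for infinitely many $4$-manifolds. The restriction I would exploit is that $X$ underlies a shadow of a closed $4$-manifold exactly when $\partial M_X\cong\#_k(S^1\times S^2)$ for some $k$. For fixed $X$ the boundary $\partial M_X$ is a graph manifold whose gluing and Seifert data are prescribed by the combinatorics of $X$, with the gleams entering as Euler numbers. Since $\#_k(S^1\times S^2)$ is rigid --- its first homology is free and it contains no lens-space summand nor any nontrivially fibered Seifert piece --- a gleam of large absolute value forces torsion in $H_1(\partial M_X)$ or an incompressible piece that cannot occur in $\#_k(S^1\times S^2)$. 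Thus only finitely many gleams on $X$ can produce a closed-manifold shadow. I expect this step to be the crux: it requires the graph-manifold description of shadow boundaries and a careful bookkeeping of how the Euler numbers contributed by the gleams must cancel.

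Granting the two finiteness statements, I would finish as follows. Each admissible pair $(X,\gl)$ determines $M_X$ uniquely up to diffeomorphism by Turaev's reconstruction, and by Laudenbach--Po\'enaru the closed $4$-manifold obtained from $M_X$ by capping its boundary $\#_k(S^1\times S^2)$ with $3$- and $4$-handles is unique. Hence each of the finitely many special polyhedra with $n$ true vertices, paired with its finitely many closed-type gleams, yields only finitely many diffeomorphism types, so $\{W : \spshco(W)=n\}$ is finite and $\spshco$ is finite-to-one.
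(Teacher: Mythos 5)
First, a remark on the comparison itself: the paper you were given does not prove this theorem at all --- it is quoted from \cite{Cos06b}, and the finiteness lemma at its heart is due to Martelli \cite{Mar05} --- so your proposal can only be measured against that argument. Your first step is correct: there are finitely many special polyhedra with $n$ true vertices, for exactly the reasons you give (connected quartic singular graph, total attaching length $6n$ of the disk regions). Your closing step (Turaev reconstruction plus Laudenbach--Po\'enaru) is also fine. The gap is precisely the step you flag as the crux, and it is not an omission but a false claim: a fixed special polyhedron can admit \emph{infinitely many} gleams whose thickening has boundary $\#_k(S^1\times S^2)$. Concretely, let $X$ be the $2$-sphere with a disk glued along the equator: a special polyhedron with no true vertices, one circle of triple points, and three disk regions (two hemispheres and the membrane), all with $\Z_2$-gleam $0$. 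Turaev's reconstruction exhibits $M_X$ as $S^1\times B^3$ with three $2$-handles attached along parallel copies of $S^1\times\{\mathrm{pt}\}$ in $S^1\times S^2$, with framings equal, up to a fixed shift, to the gleams $(a,b,c)$; equivalently, $\partial M_X$ is surgery on a $0$-framed unknot $U$ together with three meridians of $U$ framed $a,b,c$. Taking the gleam family $(a,b,c)=(1,-1,c)$ and blowing down the two $\pm1$-framed meridians leaves the Hopf link with framings $(0,c)$, whose linking matrix has determinant $-1$ independently of $c$; hence $\partial M_X\cong S^3$ for \emph{every} $c\in\Z$. This also kills your proposed mechanism: the determinant of the linking form, hence $H_1(\partial M_X)$, can be constant along entire lines in gleam space, so large gleams need not create torsion or any forbidden Seifert piece.

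What rescues the theorem is that the finiteness happens one level up: infinitely many gleams may give the correct boundary, but the resulting \emph{closed $4$-manifolds} form a finite set. In the example above, the closed manifold obtained from the gleams $(1,-1,c)$ depends only on the parity of $c$ (one gets connected sums of manifolds from the list in Theorem~\ref{thm:sc^sp_at_most_one}); the infinitude of gleams is absorbed by handle slides and blow-downs that change the framed link but not the manifold. This is exactly Martelli's theorem in \cite{Mar05}: for a fixed link $L$ in $\#_g(S^1\times S^2)$, attaching $2$-handles to $\natural_g(S^1\times B^3)$ along $L$ with arbitrary framings, followed by $3$- and $4$-handles, yields only finitely many closed $4$-manifolds up to diffeomorphism. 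Combined with your (correct) combinatorial step --- each of the finitely many special polyhedra with $n$ vertices determines one such fixed link, namely the attaching curves of its regions in the boundary of the thickening of $\Nbd(S(X);X)$ --- this proves the statement. Your proposal is missing precisely this lemma, and any strategy that tries to bound the set of admissible gleams, rather than the set of resulting manifolds, cannot work.
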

Costantino classified all the closed $4$-manifolds with special shadow-complexity at most $1$. 
\begin{theorem}
[{\cite[Theorem 1.1]{Cos06b}}]
\label{thm:sc^sp_at_most_one}
Let $W$ be a cloed $4$-manifold. 
The following are equivalent;
\begin{itemize}
 \item
$\spshco(W)=0$, 
 \item
$\spshco(W)\leq 1$, and 
 \item
$W$ is diffeomorphic to either $S^4$, $\CP$, $\mCP$, $\CP\#\CP$, $\mCP\#\mCP$, $\CP\#\mCP$ or 
$S^2\times S^2$. 
\end{itemize}
\end{theorem}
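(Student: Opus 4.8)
The plan is to establish $(2)\Rightarrow(3)$ and $(3)\Rightarrow(1)$; together with the trivial $(1)\Rightarrow(2)$ these close the cycle and give all three equivalences. The implication $(3)\Rightarrow(1)$ is constructive: for each of the seven manifolds I would write down an explicit special shadow with no true vertex. The basic building block is the polyhedron obtained by gluing three disks along a common boundary circle; varying the gleams on its three regions realizes the four simply connected rank-two cases, while small variants (changing the triod-bundle monodromy along the circle, or the number of regions) together with the $3$-handles supplied by capping off realize $S^4$, $\CP$ and $\mCP$. Granting this, the whole content lies in $(2)\Rightarrow(3)$: classify the closed special polyhedra $X$ with $c(X)\le1$ that are shadows of a closed $4$-manifold, and identify the resulting $W$.

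First I would treat $c(X)=0$. Since a true vertex is precisely a branch point of the singular set, $c(X)=0$ forces every component of $S(X)$ to be a circle, and $S(X)\neq\emptyset$ because a closed surface is never special. A regular neighborhood of such a circle is a triod-bundle over $S^1$, hence is governed by a monodromy permuting its three prongs---an element of the symmetric group on three letters, together with the orientation datum recorded by $\gl_2$. Requiring that every region be an open disk and that $X$ be connected reduces the classification to a finite bookkeeping problem in these monodromies and in the degrees with which the disk boundaries wrap the circles of $S(X)$.

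Next, for each combinatorial type I would run Turaev's reconstruction to build $M_X$ and compute $\partial M_X$. The decisive filter is the defining property of a shadow of a closed manifold: the type survives only when $\partial M_X\cong\#_k(S^1\times S^2)$. This is exactly what discards the spurious configurations---for instance a single disk wrapping a circle three times gives $\pi_1(X)=\Z/3$ and a boundary not of this form, so it is not a shadow of any closed $4$-manifold. For the surviving types, $\pi_1(W)\cong\pi_1(X)$ turns out to be trivial, and I would read off $b_2(W)$ and the intersection form from $M_X$ together with the $3$- and $4$-handles attached in the capping; by Laudenbach--Po\'enaru the closed $W$ is well defined. Since $\pi_1(W)=1$ and $b_2(W)\le2$ in every surviving case, $W$ is pinned down by its intersection form to be one of the seven listed manifolds.

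Finally, the case $c(X)=1$ is where I expect the main difficulty, as it carries the gap $(2)\Rightarrow(1)$ that no closed $4$-manifold has special shadow-complexity exactly $1$. The relation $4V=2E$ forces a single vertex joined to itself by two loop-edges, a very rigid local picture; I would enumerate the finitely many ways to cap this off by disk regions and again apply the filter $\partial M_X\cong\#_k(S^1\times S^2)$. The aim is to show that every surviving one-vertex shadow reconstructs a manifold already on the rank-$\le2$ list, which therefore also possesses a vertex-free special shadow; this simultaneously completes $(2)\Rightarrow(3)$ and forces $\spshco(W)=0$ whenever $\spshco(W)\le1$. The delicate points throughout are tracking the gleam contributions to the intersection form during reconstruction and certifying completeness of the two enumerations, rather than any isolated hard computation.
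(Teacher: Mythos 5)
This statement is not proved in the paper at all: it is quoted from Costantino \cite{Cos06b} (Theorem 1.1 there), so there is no internal proof to compare against, and your proposal has to be measured against Costantino's original argument. In outline you reconstruct that argument faithfully: enumerate the closed special polyhedra with at most one true vertex, run Turaev's reconstruction, keep only the polyhedra and gleams for which $\partial M_X\cong\#_k(S^1\times S^2)$, identify the resulting closed manifolds (with Laudenbach--Po\'enaru \cite{LP72} guaranteeing the capping is well defined), and separately exhibit vertex-free special shadows of the seven listed manifolds. Two remarks on the enumeration itself: since the attaching circle of a disk region is connected, each region attaches along a single component of $S(X)$, so connectivity of $X$ forces $S(X)$ to be connected; this is what makes the list finite, giving exactly one circle (with its three possible monodromy/degree patterns) when $c(X)=0$, and it is also what rules out extra circle components alongside your one-vertex, two-loop graph when $c(X)=1$ --- a point your sketch does not address.

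The genuine gap is in your last identification step. You conclude that since $\pi_1(W)=1$ and $b_2(W)\le 2$ in every surviving case, ``$W$ is pinned down by its intersection form to be one of the seven listed manifolds.'' In the smooth category this is false as a matter of principle: Freedman's theorem determines a simply connected closed $4$-manifold from its intersection form only up to \emph{homeomorphism}, whereas the statement being proved asserts \emph{diffeomorphism} --- and the whole point of the invariant, as the paper emphasizes, is that $\spshco$ is sensitive to smooth structures, so one cannot quietly pass through a topological classification. The identification must instead be carried out explicitly, e.g.\ by Kirby calculus on the handle decompositions that Turaev's reconstruction produces (a dotted circle for the thickened singular set plus $2$-handles attached along parallel curves with framings given by the gleams), recognizing each surviving manifold directly as $S^4$, $\CP$, $\mCP$, $S^2\times S^2$, or a connected sum of two of these; this is how the argument in \cite{Cos06b} proceeds. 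With that replacement, and with the computations you defer actually carried out, your plan would close the cycle of implications correctly.
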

On the other hand, while the shadow-complexity is not finite-to-one, 
Martelli characterized all the closed $4$-manifolds with shadow-complexity $0$ in \cite{Mar11}. 
We also refer the reader to~\cite{KMN18} for the characterization of 
closed $4$-manifolds with {\it connected shadow-complexity} at most $1$, 
where the connected shadow-complexity is another kind of invariant defined by using shadows. 
See~\cite{KMN18} for more details. 

There have been no results to answer a question: 
for an arbitrarily given $n\in\Z_{\geq0}$, 
does there exist a closed $4$-manifold whose (ordinary, special or connected) 
shadow-complexity is exactly $n$? 
Note that we can easily give an example of a $4$-manifold with boundary 
whose (special) shadow-complexity is exactly $n$, see~\cite{IK14}. 
\section{Results}
\label{sec:Results}
For a positive integer $k$, 
let $Z_k$ be a special polyhedron described in the lowermost part of Figure~\ref{fig:Xk}, 
which will be explained in more detail in 
Subsection~\ref{subsec:The_special_polyhedra_$X_k$_and_$Z_k$}. 
Our theorem determines the special shadow-complexity of $\#_k(S^1\times S^3)$, 
and actually $Z_k$ is a shadow of $\#_k(S^1\times S^3)$ attaining the special shadow-complexity. 
We start this section with explaining how we find $Z_k$. 

\subsection{Boundary-disposal and vertex-creation}
Here we introduce two kinds of modifications used to construct $Z_k$. 

We first explain a modification called a {\it boundary-disposal}. 
Let $X$ be a shadowed polyhedron with 
$S(X)\ne\emptyset$ and 
$\partial X$ consisting of one circle $C_0$, 
so $X$ has a single boundary region, say $R_0$. 
Let us consider an arc $\gamma$ contained in $R_0$ 
such that it connects a point in $S(X)$ and a point in $C_0$. 
See the upper left part of Figure~\ref{fig:specialization}, 
which depicts $\Nbd(\gamma\cup C_0;X)$. 
\begin{figure}[tbp]
\labellist
\footnotesize\hair 2pt
\pinlabel {\textcolor[rgb]{0, 0.50196, 0.50196}{$C_0$}} [Bl] at 132.23 217.66
\pinlabel $1/2$ [B] at 313.39 227.58
\pinlabel $1/2$ [B] at 274.96 227.58
\pinlabel  -$1$ [B] at 232.44 209.99
\pinlabel  glue [B] at  59.53 164.64
\pinlabel  glue [B] at 272.13 164.64
\pinlabel   (i) [B] at 189.92 238.34
\pinlabel  (ii) [B] at 189.92 153.30
\pinlabel (iii) [B] at 189.92  82.43
\pinlabel $1/2$ [B] at 107.72  48.42
\pinlabel   $0$ [B] at  96.38  75.51
\pinlabel $1/2$ [B] at  62.36  69.68
\pinlabel  -$1$ [B] at  19.84  54.09
\pinlabel  glue [B] at  59.53   8.73
\pinlabel  glue [B] at 272.13   8.73
\pinlabel   $0$ [B] at 308.98  75.51
\pinlabel $1/2$ [B] at 274.96 69.68
\pinlabel -$1/2$ [B] at 320.31 48.42
\pinlabel {\textcolor[rgb]{0, 0.50196, 0}{$\gamma$}} [B] at 100.80 234.67
\endlabellist
\centering
\includegraphics[width=1\hsize]{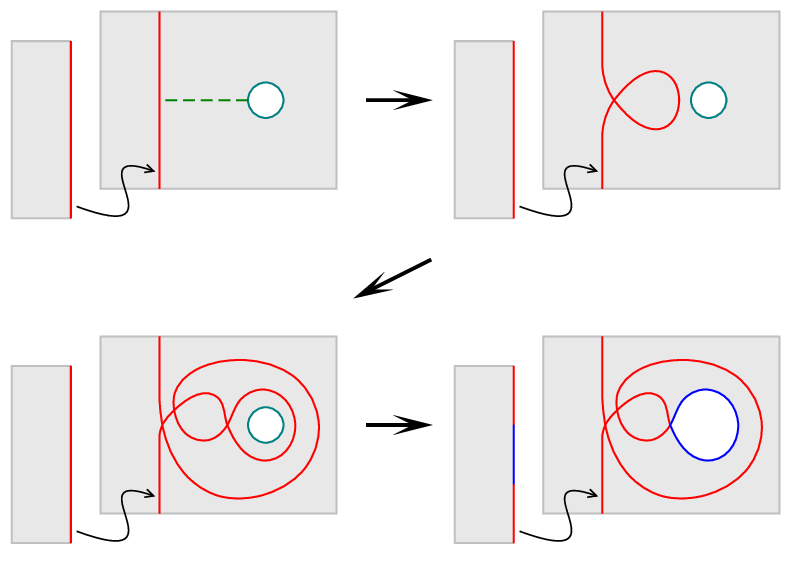}
\caption{Bonundary-disposal.}
\label{fig:specialization}
\end{figure}
\begin{figure}[tbp]
\labellist
\footnotesize\hair 2pt
\pinlabel  glue [B] at  59.53   8.73
\pinlabel  glue [B] at 215.43   8.73
\pinlabel  \textcolor{red}{$\delta$} [Bl] at 79.37 70.94
\endlabellist
\centering
\includegraphics[width=.8\hsize]{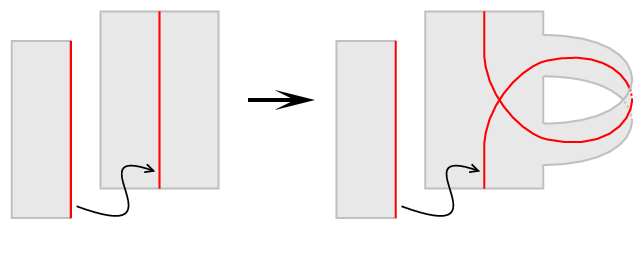}
\caption{Vertex-creation.}
\label{fig:creation}
\end{figure}
The moves shown in Figures~\ref{fig:specialization}-(i) and -(ii) modify $X$ 
into another shadowed polyhedron without 
changing the corresponding $4$-manifold~\cite{Tur94}. 
By the moves (i) and (ii), exactly $3$ true vertices are created in total. 
The move shown in Figures~\ref{fig:specialization}-(iii) is a collapsing 
so that the annular boundary region is removed, 
which also removes one true vertex. 
As a result, 
the shadowed polyhedron $X$ is modified into another one $X'$ such that
\begin{itemize}
 \item
the corresponding $4$-manifold of $X'$ is the same as $X$, 
\item
exactly one disk region is newly created, 
\item
the homeomorphism types of the original regions are not changed except for $R_0$, 
and $R_0$ is changed into a surface homeomorphic to $R_0\cup_{C_0} D^2$, 
especially $\partial X'=\emptyset$, and 
\item
$c(X')=c(X)+2$. 
\end{itemize}
The composition of the moves (i), (ii) and (iii) is called a {\it boundary-disposal}. 
We stress that the resulting polyhedron is special 
if $X\cup_{\partial X}D^2$ is special. 
By this modification, 
the singular set is changed as shown in the upper part of Figure~\ref{fig:moves}. 
\begin{figure}[tbp]
\labellist
\footnotesize\hair 2pt
\pinlabel $R_0$           [Br] at  15.12  73.66
\pinlabel {vertex-}     [B]  at 108.67  58.23
\pinlabel {creation}    [B]  at 108.67  41.23
\pinlabel {boundary-}   [B]  at 108.67 129.10
\pinlabel {disposal} [B]  at 108.67 112.10
\endlabellist
\centering
\includegraphics[width=.6\hsize]{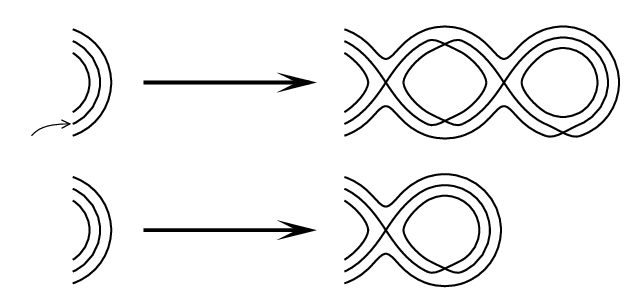}
\caption{Changes on the singular sets by the modifications boundary-disposal and vertex-creation, 
where we only illustrate how the regions are attached along the singular sets. 
The singular sets themselves are not pictured in the figure. }
\label{fig:moves}
\end{figure}

We next introduce another modification called {\it vertex-creation}. 
Let $X$ be a special polyhedron. 
Then, the homeomorphism type of $X$ is 
determined only by a regular neighborhood $\Nbd(S(X);X)$ of its singular set. 
Let $\delta$ be a short arc contained in a triple line of $X$, 
see the left part of Figure~\ref{fig:creation}. 
Then we consider another special polyhedron $X'$ such that 
$\Nbd(S(X');X')$ is obtained from $\Nbd(S(X);X)$ by modifying 
near $\delta$ as shown in the figure. 
Although the simple polyhedron $X$ and the resulting one $X'$ 
do not necessarily share the same $4$-dimensional thickening, 
we emphasize the following properties; 
\begin{itemize}
 \item
the numbers of regions of $X$ and $X'$ are equal, and 
\item
$c(X')=c(X)+1$, especially $S(X')$ is homotopy equivalent to the wedge sum of $S(X)$ and a circle. 
\end{itemize}
The modification to get $X'$ from $X$ is called a {\it vertex-creation}, 
by which the singular set is changed as shown in the lower part of Figure~\ref{fig:moves}. 

\subsection{The special polyhedra $X_k$ and $Z_k$}
\label{subsec:The_special_polyhedra_$X_k$_and_$Z_k$}
First, let $X_1$ be a special polyhedron whose singular set is indicated in 
the uppermost part of Figure~\ref{fig:Xk}. 
It has a single disk region, and $S(X)$ is homeomorphic to a circle. 
We next define $X_2$ as a special polyhedron obtained from $X_1$ by a vertex-creation, 
see the second from the top in Figure~\ref{fig:Xk}.  
Then $X_2$ also has a single disk region, and $S(X)$ is 
homeomorphic to the bouquet of two circles. 
For $k\geq3$, let $X_k$ be a special polyhedron obtained from $X_{k-1}$ by a vertex-creation, 
see Figure~\ref{fig:Xk}. 
The special polyhedron $X_k$ has a single disk region, 
and $S(X_k)$ is homotopy equivalent to the bouquet of $k$ circles. 
Note that $c(X_k)=k-1$. 
\begin{figure}[tbp]
\labellist
\footnotesize\hair 2pt
\pinlabel   $X_3$ [Br] at 25.54 171.46
\pinlabel   $X_2$ [Br] at 25.54 242.32
\pinlabel   $X_1$ [Br] at 25.54 313.19
\pinlabel   $X_k$ [Br] at 25.54 100.59
\pinlabel   $Z_k$ [Br] at 25.54 29.7
\endlabellist
\centering
\includegraphics[width=.8\hsize]{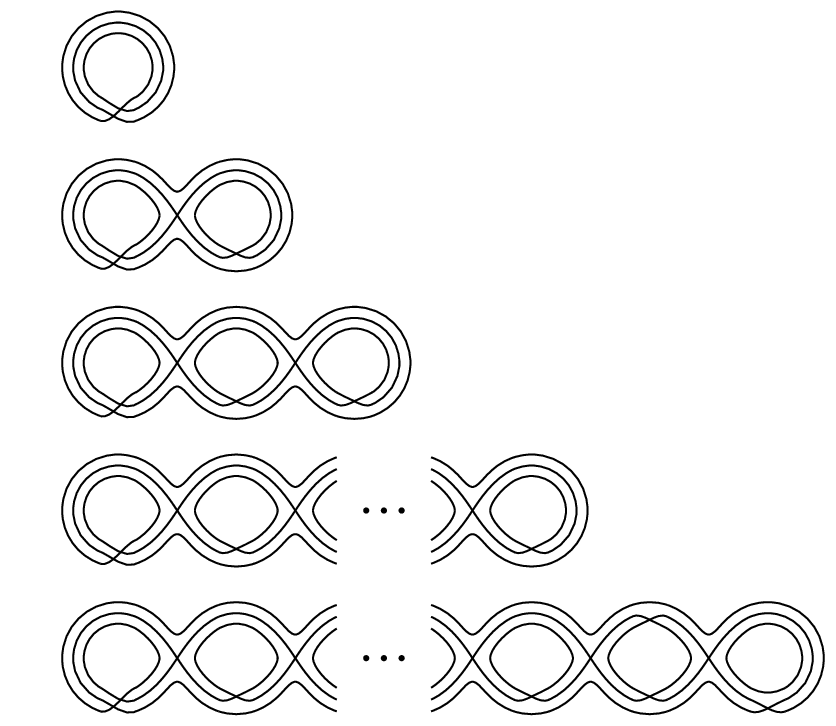}
\caption{The special polyhedra $X_1,X_2,X_3,X_k$ and $Z_k$, 
where we only illustrate how the regions are attached along the singular sets. 
The singular sets themselves are not pictured in the figure. }
\label{fig:Xk}
\end{figure}

For each $k\in\Z_{\geq1}$, 
let $X_k^\circ$ be a simple polyhedron obtained from $X_k$ 
by removing an open disk from the single region of $X_k$. 
It is easy to see that $X_k^\circ$ has a single annular boundary region. 
Then $X_k^\circ$ collapses onto $S(X_k^\circ)$, which is homotopy equivalent to 
the bouquet of $k$ circles. 
Hence $X_k^\circ$ has a unique $4$-dimensinoal thickening $\#_k(S^1\times B^3)$, 
and it can be regarded as a shadow of $\#_k(S^1\times S^3)$. 

We define $Z_k$ as a simple polyhedron obtained from $X_k^\circ$ by a boundary-disposal. 
By the construction, $Z_k$ is a special shadow of $\#_k(S^1\times S^3)$. 
Moreover, we have $c(Z_k)=k+1$, and $Z_k$ has exactly two disk regions. 

\subsection{The proof of Theorem~\ref{thm:mainthm}}
We finally give the proof of Theorem~\ref{thm:mainthm}. 
\begin{proof}[Proof of Theorem~\ref{thm:mainthm}]
We already have $\shco(\#_k(S^1\times S^3))\leq k+1$ due to the existence of $Z_k$, 
and then we show $\shco(\#_k(S^1\times S^3))\geq k+1$ below.

Let $X$ be a special shadow of $\#_k(S^1\times S^3)$ with $c(X)=n$, 
so it suffices to show that $n\geq k+1$. 
By the classification of closed $4$-manifolds with 
special shadow-complexity zero~\cite[Theorem 1.1]{Cos06b}, 
we need $n\geq 2$. 
Let $m$ be the number of regions of $X$, which is required to satisfy $m\geq2$ by~\cite[Corollary 3.17]{Cos06b}. 

Since $X$ has at least $2$ regions, 
there does not exist a triple line 
such that only a single region passes through it. 
Therefore, 
there exists a triple line $a_1$ and a region $R_1$ passing through $a_1$ exactly once. 
Take a spanning tree $T$ of $S(X)$ so that $T$ does not contains $a_1$, 
which can be done since $S(X)$ is a quartic graph. 
It is easily seen that $S(X)\setminus (T\cup a_1)$ consists of $n$ triple lines, 
which will be denoted by $a_2,\ldots,a_{n+1}$. 
The triple lines in $T$ are denoted by $a_{n+2},\ldots,a_{2n}$. 
Let $R_2,\ldots,R_m$ denote the regions other than $R_1$. 
We give orientations to triple lines and regions arbitrarily. 
For each $j\in\{1,\ldots,m\}$, 
we will consider the region $R_j$ as the image of the interior of a closed disk $D_j$ attached along $S(X)$ by some attaching map $\varphi_j$. 
Then the preimage of true vertices by $\varphi_j$ devides $\partial D_j$ into the preimage of some triple lines, 
which gives a word $\tilde r_j$ in $\{a_1,\ldots,a_{2n}\}$. 
Note that we will not distinguish two words forming $a_i^{\pm1}w$ and $wa_i^{\pm1}$ 
for some $i\in\{1,\ldots,2n\}$ and some word $w$. 
We then remove alphabets $a_{n+2},\ldots,a_{2n}$ and their inverses from $\tilde r_j$, 
so that we obtain a word in  $\{a_1,\ldots,a_{n+1}\}$. 
Let $r_j$ denote the obtained word. 
The above removal corresponds to the quotient $X\to X/T$, 
and by the homotopy equivalence $X\simeq X/T$, 
$\pi_1(X)$ admits a presentation 
\[
\pi_1(X)\cong 
\langle
a_1,\ldots,a_{n+1}\mid r_1,\ldots,r_m
\rangle.
\]

We need the following three claims. 
\begin{claim}
\label{clm:triple_line}
For each $i\in\{1,\ldots,n+1\}$, the total number of times $a_i$ and $a_i^{-1}$ appear in $r_1,\ldots,r_m$ is exactly $3$. 
\end{claim}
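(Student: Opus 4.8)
The plan is to count, for each triple line $a_i$, how many times it is glued to region boundaries, using the fact that $X$ is a special polyhedron. The key structural input is that near a triple line of a simple polyhedron the local model is type (ii) of Figure~\ref{fig:local_model}: exactly three sheets of regions meet along the line. This is a purely local statement, so it will be the backbone of the argument.

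First I would make precise the relationship between the words $r_1,\ldots,r_m$ and the combinatorics of the gluing. Each region $R_j$ is the image of $\Int D_j$ under an attaching map $\varphi_j$, and the boundary circle $\partial D_j$ is subdivided by preimages of true vertices into arcs, each of which maps onto some triple line $a_\ell$. Thus the total number of times a fixed alphabet $a_\ell$ (together with $a_\ell^{-1}$) appears across all the words $\tilde r_1,\ldots,\tilde r_m$ equals the number of arcs in the various $\partial D_j$ that map onto $a_\ell$. I would then invoke the type-(ii) local model: along the triple line $a_\ell$, precisely three region-sheets are incident, so the gluing maps contribute exactly three such boundary arcs. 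Hence the total count in $\tilde r_1,\ldots,\tilde r_m$ is exactly $3$ for every triple line, including $a_1,\ldots,a_{n+1}$ and also $a_{n+2},\ldots,a_{2n}$.

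The next step is to check that passing from $\tilde r_j$ to $r_j$ does not disturb the count for the indices in question. The words $r_j$ are obtained from $\tilde r_j$ by deleting all occurrences of $a_{n+2},\ldots,a_{2n}$ and their inverses (the triple lines lying in the spanning tree $T$). Since this deletion only removes letters with indices in $\{n+2,\ldots,2n\}$, the number of occurrences of each $a_i$ with $i\in\{1,\ldots,n+1\}$ is unchanged. Therefore the total number of times $a_i$ and $a_i^{-1}$ appear in $r_1,\ldots,r_m$ equals their total count in $\tilde r_1,\ldots,\tilde r_m$, which is exactly $3$ by the previous step, giving the claim.

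The main obstacle, and the point requiring the most care, is the bookkeeping in the first step: one must verify that the three incident sheets along a triple line really correspond to three distinct boundary arcs in the disk decomposition, counting with multiplicity when a single region wraps around the same triple line more than once. The cleanest way to handle this is to view the count as a sum over the three local sheets at each point of $a_\ell$, so that a region passing through $a_\ell$ several times simply contributes several arcs, each still counted by one of the three sheets. I would phrase this so that the equality ``three sheets $=$ three arcs'' is manifestly a consequence of the local model and independent of how the global regions happen to be identified, which avoids any subtlety about self-gluings.
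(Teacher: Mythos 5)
Your proof is correct and takes essentially the same approach as the paper: the paper's proof is the single observation that each triple line has exactly three regions (counted with multiplicity) passing through it, which is your type-(ii) local model argument. Your explicit check that deleting the tree letters $a_{n+2},\ldots,a_{2n}$ leaves the count of $a_i^{\pm1}$ for $i\in\{1,\ldots,n+1\}$ unchanged is left implicit in the paper, but it is the same argument.
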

\begin{proof}
It follows from that 
for each triple line, the number of regions (counted with multiplicity) 
pass through the triple line is 3. 
\end{proof}
\begin{claim}
\label{clm:all_reduced}
Each word of $r_1,\ldots,r_m$ is reduced, 
that is, it does not contain a subword forming $a_ia_i^{-1}$ or $a_i^{-1}a_i$ for any $i\in\{1,\ldots,n+1\}$. 
Especially, $r_1,\ldots,r_m$ are not the empty word. 
\end{claim}
\begin{proof}
[Proof of Claim~\ref{clm:all_reduced}]
Let us fix a true vertex $x$. 
Let $a_{i_1}$, $a_{i_2}$, $a_{i_3}$ and $a_{i_4}$ be the four triple lines connecting to the true vertex $x$, 
and suppose their orientations are given so that 
$a_{i_1}^{\epsilon_1}$, $a_{i_2}^{\epsilon_2}$, $a_{i_3}^{\epsilon_3}$ and $a_{i_4}^{\epsilon_4}$ are directed to $x$, 
for some $\epsilon_1,\epsilon_2,\epsilon_3,\epsilon_4\in\{-1,1\}$. 
Note that there may exist $i,i'\in\{i_1,i_2,i_3,i_4\}$ with $i\ne i'$ such that $a_{i}^{\epsilon_i}=a_{i'}^{-\epsilon_{i'}}$, 
see Figure~\ref{fig:claim}. 
\begin{figure}[tbp]
\labellist
\small\hair 2pt
\pinlabel $x$ [Bl] at  105.01 36.43
\pinlabel $a_{i_1}^{\epsilon_1}$ [Bl] at  56.41 52.44
\pinlabel $a_{i_2}^{\epsilon_2}$ [Bl] at  56.41 12.76
\pinlabel $a_{i_3}^{\epsilon_3}$ [Bl] at  95.51 12.76
\pinlabel $a_{i_4}^{\epsilon_4}$ [Bl] at  95.51 49.61
\endlabellist
\centering
\includegraphics[width=.4\hsize]{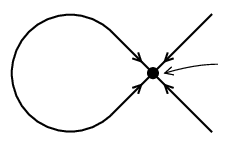}
\caption{An example showing a part of the singular set near a true vertex $x$ 
such that $a_{1}^{\epsilon_1}$ coincides with $a_{2}^{-\epsilon_{2}}$. }
\label{fig:claim}
\end{figure}
Then a region passing through $x$ is attached along either one of 
$a_{i_1}^{\epsilon_1}a_{i_2}^{-\epsilon_2}$, $a_{i_2}^{\epsilon_2}a_{i_3}^{-\epsilon_3}$, $a_{i_3}^{\epsilon_3}a_{i_4}^{-\epsilon_4}$, $a_{i_4}^{\epsilon_4}a_{i_1}^{-\epsilon_1}$, $a_{i_1}^{\epsilon_1}a_{i_3}^{-\epsilon_3}$ or $a_{i_2}^{\epsilon_2}a_{i_4}^{-\epsilon_4}$. 
Therefore, $\tilde r_1,\ldots,\tilde r_m$ are reduced. 
As already mentioned above, the removal of $a_{n+2}^{\pm1},\ldots,a_{2n}^{\pm1}$ corresponds to the quotient $X\to X/T$. 
Since $T$ is a tree graph, the words $r_1,\ldots,r_m$ are also reduced. 
\end{proof}
\begin{claim}
\label{clm:no_common_subwords}
For $j,j'\in\{1,\ldots,m\}$ with $j\ne j'$ and any subword $w$ of $r_j$ of length at least $2$, 
both $w$ and $w^{-1}$ do not contained in $r_{j'}$. 
\end{claim}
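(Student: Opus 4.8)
The plan is to reduce the statement to subwords of length exactly $2$ and then to argue geometrically at the level of the six local sheets meeting at a true vertex. For the reduction, note that if $w=u_1u_2\cdots u_\ell$ with $\ell\geq2$ is a subword of $r_j$, then $u_1u_2$ is a length-$2$ subword of $r_j$, while $w^{-1}$ contains the length-$2$ subword $u_2^{-1}u_1^{-1}$. Hence it suffices to prove the following assertion: for any two letters $\alpha,\beta\in\{a_1^{\pm1},\dots,a_{n+1}^{\pm1}\}$, all occurrences of $\alpha\beta$ and of $\beta^{-1}\alpha^{-1}$ among $r_1,\dots,r_m$ lie in one and the same region. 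Applying this with $(\alpha,\beta)=(u_1,u_2)$ would then rule out both $w\subseteq r_{j'}$ and $w^{-1}\subseteq r_{j'}$ for $j'\neq j$.

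To prove the assertion I would read a length-$2$ subword back into $X$. Recall that near a true vertex (a point of type (iii)) the polyhedron has six local sheets, each bounded by a pair of the four incident triple-line germs, and that each sheet is a germ of exactly one region; this is the local fact already used in the proof of Claim~\ref{clm:all_reduced}. Since $r_j$ is obtained from $\tilde r_j$ by deleting the tree letters $a_{n+2}^{\pm1},\dots,a_{2n}^{\pm1}$, consecutive letters $\alpha\beta$ in $r_j$ lift to an occurrence in $\tilde r_j$ of $\alpha$ followed by $\beta$ with only tree letters in between, that is, to a walk traced by $\partial D_j$ which runs along $\alpha$, then along a walk in $T$ from the head of $\alpha$ to the tail of $\beta$, and then along $\beta$. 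The hard part is precisely that, after collapsing $T$, such a subword need not come from a single true vertex. I would resolve this by observing that the intermediate walk is determined by $\alpha,\beta$ alone: since $\tilde r_j$ is reduced by Claim~\ref{clm:all_reduced}, its $T$-portion is a reduced walk in the tree $T$ between two prescribed vertices, hence the unique geodesic there. Consequently the corner made by $\partial D_j$ at the head of $\alpha$ (incoming along $\alpha$, outgoing along the first edge of this geodesic, or along $\beta$ itself when the geodesic is trivial) is determined by $\alpha,\beta$, and so is its local sheet $\sigma(\alpha,\beta)$. As $\sigma(\alpha,\beta)$ is a germ of $R_j$ and each sheet is a germ of only one region, the subword $\alpha\beta$ can occur only in the region containing $\sigma(\alpha,\beta)$; in particular $\alpha\beta$ cannot appear in two distinct words.

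Finally I would match $\alpha\beta$ with $\beta^{-1}\alpha^{-1}$. The walk $\pi$ associated with $\alpha\beta$ is traced by $\partial D_j$, so every local sheet along $\pi$ is a germ of $R_j$; and the walk associated with $\beta^{-1}\alpha^{-1}$ is exactly the reverse of $\pi$, so it selects a sheet (the corner at the tail of $\beta$) that is one of these same sheets. Hence $\beta^{-1}\alpha^{-1}$ can likewise occur only in $R_j$, which completes the assertion and with it the claim. The only delicate point, namely that collapsing the spanning tree $T$ turns single true-vertex corners into tree-length walks, is handled throughout by the uniqueness of reduced paths in $T$ together with the reducedness of $\tilde r_j$ from Claim~\ref{clm:all_reduced}.
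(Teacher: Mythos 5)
Your proposal is correct and follows essentially the same route as the paper's proof: the one-to-one correspondence between the six sheets (corners) at a true vertex and pairs of incident triple-line germs shows that $\tilde r_1,\ldots,\tilde r_m$ share no length-$2$ subwords up to inversion, and the uniqueness of reduced paths in the spanning tree $T$ transfers this to $r_1,\ldots,r_m$ after collapsing. Your write-up merely makes explicit two points the paper leaves implicit — the reduction to length-$2$ subwords and the fact that the deleted tree letters form the unique geodesic in $T$, so that occurrences of $\alpha\beta$ and $\beta^{-1}\alpha^{-1}$ lift to the same corner traversed in opposite directions.
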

\begin{proof}
[Proof of Claim~\ref{clm:no_common_subwords}]
As seen in the proof of Claim~\ref{clm:all_reduced}, 
each of six regions around a true vertex determines a pair chosen in the four triple line connected to the triple line, 
and this correspondence is one-to-one. 
Hence, $\tilde r_1,\ldots,\tilde r_{m}$ do not contain common subwords. 
Since for any two leaves of $T$, there exists a unique path in $T$ connecting them, 
the words $r_1,\ldots,r_{m}$ also contain no common subwords. 
\end{proof}

By the assumption on $a_1$ and $R_1$ and Claim~\ref{clm:all_reduced}, 
there exists a reduced word $w_1$ not containing $a_1$ or $a_1^{-1}$
such that $r_1=a_1w_1^{-1}$, 
so we have 
\[
\pi_1(X)\cong 
\langle
a_1,\ldots,a_{n+1}\mid a_1w_1^{-1},\,r_2\ldots,r_m
\rangle.
\]
Note that $w_1$ is possibly the empty word. 
We simultaneously remove a generator $a_1$ and a relator $a_1w_1^{-1}$ from the presentation 
with changing other relations suitably, 
and then we obtain a new presentation of $\pi_1(X)$ with $n$ generators. 
Since $\pi_1(X)$ is the free group $F_k$ of rank $k$, 
we have $n\geq k$. 
We want to show that $n\geq k+1$, 
so we suppose that $n=k$ for a contradiction. 

By Claim~\ref{clm:triple_line} and by exchanging the orientations of regions if necessary, 
we can assume without loss of generality that either one of the following holds;
\begin{itemize}
 \item[(i)]
$m\geq3$, and each $r_2$ and $r_3$ contains $a_1$ exactly once, 
 \item[(ii)]
$r_2$ contains $a_1$ and $a_1^{-1}$ exactly once each, or
 \item[(iii)]
$r_2$ contains $a_1$ exactly twice.
\end{itemize}

Assume (i) holds. 
Then we can write $r_2=a_1w_2$ and $r_3=a_1w_3$ for some words $w_2$ and $w_3$ in $\{a_2,\ldots,a_{k+1}\}$, and $\pi_1(X)$ admits a presentation
\[
\pi_1(X)\cong 
\langle
a_2,\ldots,a_{k+1}\mid w_1 w_2 ,\,w_1 w_3,\, r_4,\ldots,r_m
\rangle.
\]
Then, there exists a surjection from the group presented by
\[
\langle
a_2,\ldots,a_{k+1}\mid w_1 w_2
\rangle 
\]
to $\pi(X)$. 
Therefore, $w_1w_2$ must be reduced to the empty word since $\pi_1(X)\cong F_k$. 
If $w_1$ is the empty word, then $w_2$ is reduced to the empty word. 
Then $r_2$ is also reduced to the empty word, which contradicts to Claim~\ref{clm:all_reduced}. 
Suppose $w_1$ is not the empty word. 
If $w_1=a_i$ for some $i\in\{2,\ldots,k+1\}$, then $w_2=a_i^{-1}$. 
Hence $r_2=a_1a_i^{-1}=r_1$, which contradicts Claim~\ref{clm:no_common_subwords}. 
If $w_1$ is of length at least $2$, 
we can check that $w_1$ and $w_2^{-1}$ have a common subword. 
Then $r_1$ and $r_2^{-1}$ also have a common subword, a contradiction to Claim~\ref{clm:no_common_subwords}. 

Assume (ii) holds, 
and suppose that $r_2=x_1 w_2 x_1^{-1} w_2'$ for some words $w_2, w_2'$ in $\{a_2,\ldots,a_{k+1}\}$. 
Then we have 
\[
\pi_1(X)\cong 
\langle
a_2,\ldots,a_{k+1}\mid w_1 w_2 w_1^{-1} w_2',\,r_3,\ldots,r_m
\rangle,
\]
to which there exists the surjection from the group presented by
\[
\langle
a_2,\ldots,a_{k+1}\mid w_1 w_2 w_1^{-1} w_2'
\rangle. 
\]
Thus, $w_1w_2w_1^{-1}w'_2$ must be reduced to the empty word since $\pi_1(X)\cong F_k$. 
If $w_1$ is the empty word, then $r_1=a_1$ and $r_2=a_1w_2a_1w'_2$. 
By Claim~\ref{clm:triple_line}, 
the number of times that $a_i$ and $a_i^{-1}$ appear in $r_2$ is exactly $3$ for each $i\in\{2,\ldots,k+1\}$, 
and that in $w_2w'_2$ is also $3$. 
Then $w_2w'_2$ can not be reduced to the empty word, which is a contradiction. 
Supposing $w_1$ is not the empty word, 
we can check that $r_1$ and $r_2$ has a common subword, a contradiction to Claim~\ref{clm:no_common_subwords}. 

The case (iii) also leads to a contradiction in the same way as in (ii). 

In all cases, we derive a contradiction. 
Hence $n\geq k+1$, which completes the proof of Theorem~\ref{thm:mainthm}. 
\end{proof}

\begin{corollary}
\label{cor:surj}
The special shadow-complexity of closed $4$-manifolds is a surjection to $\Z_{\geq0}\setminus\{1\}$.
\end{corollary}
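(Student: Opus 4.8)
The plan is to establish surjectivity by combining the main theorem with the known classification of low-complexity manifolds. Recall from Theorem~\ref{thm:sc^sp_at_most_one} that the value $1$ is never attained, so the target set $\Z_{\geq0}\setminus\{1\}$ is forced and cannot be enlarged to all of $\Z_{\geq0}$; this explains why the codomain in the statement excludes $1$. Thus the task is to exhibit, for every $n\in\Z_{\geq0}\setminus\{1\}$, a closed $4$-manifold $W$ with $\spshco(W)=n$.

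First I would handle $n=0$ by citing Theorem~\ref{thm:sc^sp_at_most_one}: the manifold $S^4$ (or any of the listed manifolds) has $\spshco=0$. Next, for every integer $n\geq2$, I would simply take $W=\#_{n-1}(S^1\times S^3)$ and invoke Theorem~\ref{thm:mainthm}, which gives $\spshco(\#_k(S^1\times S^3))=k+1$. Setting $k=n-1\geq1$ yields $\spshco(W)=(n-1)+1=n$, covering every $n\geq2$. Together with the $n=0$ case, every element of $\Z_{\geq0}\setminus\{1\}$ lies in the image.

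The only subtlety worth stating explicitly is why $1$ is genuinely excluded rather than merely unaddressed: Theorem~\ref{thm:sc^sp_at_most_one} asserts that $\spshco(W)\leq1$ is equivalent to $\spshco(W)=0$, so no closed $4$-manifold realizes the value $1$. Hence the image is exactly $\Z_{\geq0}\setminus\{1\}$, and the map is a surjection onto that set.

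I do not expect any real obstacle here, as the corollary is an immediate bookkeeping consequence of the main theorem together with the cited classification; the main theorem's proof (establishing the lower bound via fundamental-group presentations) has already done all the substantive work. The one point requiring care is matching the index $k$ to the target value $n$ correctly and confirming that $k=n-1$ stays within the hypothesis $k\geq1$ of Theorem~\ref{thm:mainthm}, which holds precisely because we have excluded $n=1$.
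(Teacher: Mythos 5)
Your proof is correct and follows exactly the paper's argument: value $0$ is realized (e.g.\ by $S^4$) and value $1$ is excluded by Theorem~\ref{thm:sc^sp_at_most_one}, while every $n\geq 2$ is realized by $\#_{n-1}(S^1\times S^3)$ via Theorem~\ref{thm:mainthm}. The index bookkeeping $k=n-1\geq 1$ is the only point of care, and you handled it as the paper implicitly does.
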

\begin{proof}
As seen in Theorem~\ref{thm:sc^sp_at_most_one}, 
there exist closed $4$-manifolds with special shadow-complexity $0$ 
but no closed $4$-manifolds with special shadow-complexity $1$. 
We have found a closed $4$-manifold with special shadow-complexity $k+1$ 
for each positive integer $k$. 
\end{proof}

\begin{corollary}
\label{cor:lower_bound}
For any closed $4$-manifold $W$, 
$\mathrm{rank}(\pi_1(W))\leq\spshco(W)$. 
Moreover, if $\pi_1(W)$ is the free group, then $\mathrm{rank}(\pi_1(W))+1\leq\spshco(W)$. 
\end{corollary}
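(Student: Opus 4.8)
The plan is to extract both inequalities directly from the machinery already assembled in the proof of Theorem~\ref{thm:mainthm}, since neither statement needs anything beyond the group-presentation argument developed there. The first inequality comes from the generator count in the construction of the presentation of $\pi_1(X)$, while the second rests on the observation that the entire contradiction argument in the proof of Theorem~\ref{thm:mainthm} uses its hypothesis only through the isomorphism $\pi_1(X)\cong F_k$, never through the particular identity of the manifold $\#_k(S^1\times S^3)$.

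For the first inequality I would begin by disposing of the degenerate cases. If $\spshco(W)\leq1$, then by Theorem~\ref{thm:sc^sp_at_most_one} the manifold $W$ is one of $S^4,\CP,\mCP,\CP\#\CP,\mCP\#\mCP,\CP\#\mCP$ or $S^2\times S^2$, each simply connected, so $\mathrm{rank}(\pi_1(W))=0\leq\spshco(W)$ holds trivially. Otherwise I choose a special shadow $X$ realizing $\spshco(W)=c(X)=n\geq2$. As in the proof of Theorem~\ref{thm:mainthm}, the quartic graph $S(X)$ has $2n$ edges, a spanning tree $T$ uses $n-1$ of them, and after the quotient $X\to X/T$ one obtains a presentation of $\pi_1(W)\cong\pi_1(X)$ on the $n+1$ generators $a_1,\ldots,a_{n+1}$. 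Selecting a triple line $a_1$ through which some region $R_1$ passes exactly once (available since $m\geq2$ by \cite[Corollary 3.17]{Cos06b}), the relator $r_1=a_1w_1^{-1}$ permits eliminating the generator $a_1$ simultaneously with $r_1$, leaving a presentation of $\pi_1(W)$ on $n$ generators. Hence $\mathrm{rank}(\pi_1(W))\leq n=\spshco(W)$.

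For the moreover part I would suppose $\pi_1(W)\cong F_k$ is a nontrivial free group, so $k=\mathrm{rank}(\pi_1(W))\geq1$ and $W$ is not simply connected. Then $W$ does not occur in the list of Theorem~\ref{thm:sc^sp_at_most_one}, whence $\spshco(W)\geq2$ and every special shadow $X$ of $W$ satisfies $c(X)\geq2$. I would now rerun the contradiction argument of the proof of Theorem~\ref{thm:mainthm} word for word: nowhere does that argument invoke that $W$ is specifically $\#_k(S^1\times S^3)$, only that $W$ is a closed $4$-manifold with $\pi_1(X)\cong F_k$ (used to secure $n\geq2$, to force the relevant words to reduce to the empty word, and to carry out Cases~(i)--(iii)), together with the purely combinatorial Claims~\ref{clm:triple_line}--\ref{clm:no_common_subwords}. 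This yields $c(X)\geq k+1$ for every special shadow $X$, and taking the minimum gives $\spshco(W)\geq k+1=\mathrm{rank}(\pi_1(W))+1$.

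The main obstacle is bookkeeping rather than any new idea: I must verify carefully that each step of the proof of Theorem~\ref{thm:mainthm} which superficially refers to $\#_k(S^1\times S^3)$ in fact uses only $\pi_1(X)\cong F_k$ and the claims valid for arbitrary special shadows. A secondary subtlety is the rank-zero case, where the strict improvement genuinely fails (for instance $\spshco(S^4)=0$ whereas $\mathrm{rank}(\pi_1)+1=1$); thus the moreover statement must be understood as concerning nontrivial free groups, which is precisely the regime guaranteeing that $W$ lies outside the list of Theorem~\ref{thm:sc^sp_at_most_one} and hence that $\spshco(W)\geq2$.
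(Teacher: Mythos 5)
Your proof is correct and takes essentially the same route as the paper's: the first inequality is obtained from the $n$-generator presentation of $\pi_1(W)$ extracted from the proof of Theorem~\ref{thm:mainthm}, and the ``moreover'' part from the observation that the contradiction argument there uses only $\pi_1(X)\cong F_k$, never the specific manifold $\#_k(S^1\times S^3)$. Your additional care with the degenerate cases $\spshco(W)\leq 1$ and with the rank-zero caveat (the statement as written fails for $S^4$, since the trivial group is free of rank $0$, so ``free'' must be read as nontrivial free) makes your write-up more precise than the paper's two-sentence proof, but it is the same argument.
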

\begin{proof}
Let $W$ be a closed $4$-manifold and  
$X$ a shadow of $W$ with $\spshco(W)=c(X)=n$. 
In the same way as in the proof of Theorem~\ref{thm:mainthm}, 
we can always obtain a presentation of $\pi_1(W)$ with $n$ generators. 
Therefore, we have $\mathrm{rank}(\pi_1(W))\leq\spshco(W)$. 

In order to show $\shco(\#_k(S^1\times S^3))\geq k+1$ in the proof of Theorem~\ref{thm:mainthm}, 
we only use that the group $\pi_1(\#_k(S^1\times S^3))$ is free. 
Hence, the latter statement also holds. 
\end{proof}



\begin{thebibliography}{99}
\bibitem{CM17}
A. Carrega and B. Martelli, 
{\itshape Shadows, ribbon surfaces, and quantum invariants}, 
Quantum Topol. {\bf 8} (2017), 249--294. 

\bibitem{Cos06}
F. Costantino,
{\it Stein domains and branched shadows of $4$-manifolds},
Geom. Dedicata {\bf 121} (2006), 89--111.

\bibitem{Cos05}
F. Costantino, 
{\it Shadows and branched shadows of $3$- and $4$-manifolds}.
Scuola Normale Superiore, Edizioni della Normale, Pisa, Italy, 2005. 

\bibitem{Cos06b}
F. Costantino,
{\it Complexity of $4$-manifolds}, 
Experiment. Math. {\bf 15} (2006), no. 2, 237--249.

\bibitem{Cos08} 
F. Costantino, 
{\itshape Branched shadows and complex structures on $4$-manifolds}, 
J. Knot Theory Ramifications {\bf 17} (2008), 1429--1454. 

\bibitem{CT08}
F. Costantino and D.~Thurston,
{\it $3$-manifolds efficiently bound $4$-manifolds},
J. Topol. {\bf 1} (2008), no. 3, 703--745. 

\bibitem{IK14}
M. Ishikawa and Y. Koda, 
{\itshape Stable maps and branched shadows of $3$-manifolds}, 
Math. Ann. {\bf 367} (2017), 1819--1863 

\bibitem{KMN18}
Y. Koda, B. Martelli and H. Naoe, 
{\it Four-manifolds with shadow-complexity one}, 
Ann. Fac. Sci. Toulouse. : Math\'ematiques, Serie 6, {\bf 31} (2022) no. 4, pp. 1111--1212.

\bibitem{KN20}
Y. Koda and H. Naoe, 
{\it Shadows of acyclic $4$-manifolds with sphere boundary},
Algebr. Geom. Topol. {\bf 20} (2020), no. 7, 3707--3731

\bibitem{LP72}
F. Laudenbach, V. Po\'{e}naru, 
{\it A note on $4$-dimensional handlebodies}, 
Bull. Soc. Math. France {\bf 100} (1972), 337--344.

\bibitem{Mar05} 
B. Martelli, 
{\it Links, two-handles, and four-manifolds}, 
Int. Math. Res. Not. IMRN  {\bf 2005},  no. 58, 3595--3623. 

\bibitem{Mar11} 
B. Martelli, 
{\it Four-manifolds with shadow-complexity zero}, 
Int. Math. Res. Not. IMRN  {\bf 2011},  no. 6, 1268--1351. 

\bibitem{Tur94}
V.G. Turaev, 
{\it Quantum invariants of knots and $3$-manifolds}, 
De Gruyter Studies in Mathematics, vol 18, Walter de Gruyter \& Co., Berlin, 1994.

\end{thebibliography}
\end{document}